\documentclass[a4paper]{amsart}
\usepackage{amscd}
\usepackage{amsmath}
\usepackage{amssymb}
\usepackage{mathrsfs}
\usepackage{amsthm}
\usepackage{bbm} 
\usepackage{stmaryrd}
\usepackage{tikz-cd}

\usepackage[T1]{fontenc}

\makeatletter
\@namedef{subjclassname@2020}{%
  \textup{2020} Mathematics Subject Classification}
\makeatother

\frenchspacing

\numberwithin{equation}{section} \swapnumbers

\newtheorem{satz}{Satz}[section]

\newtheorem{theorem}[satz]{Theorem}
\newtheorem{proposition}[satz]{Proposition}

\newtheorem{lemma}[satz]{Lemma}
\newtheorem{assumption}[satz]{Assumption}

\newtheorem{remark}[satz]{Remark}

\newcommand{\bbr}{\mathbb{R}}
\newcommand{\bbe}{\mathbb{E}}
\newcommand{\bbn}{\mathbb{N}}
\newcommand{\bbp}{\mathbb{P}}

\newcommand{\calb}{\mathscr{B}}

\newcommand{\calf}{\mathscr{F}}

\newcommand{\calh}{\mathscr{H}}

\newcommand{\calm}{\mathscr{M}}

\newcommand{\loc}{{\rm loc}}

\newcommand{\la}{\langle}
\newcommand{\ra}{\rangle}

\newcommand{\bbI}{\mathbbm{1}}

\begin{document}

\title[Mild solutions to stochastic partial differential equations]{Mild solutions to semilinear stochastic partial differential equations with locally monotone coefficients}
\author{Stefan Tappe}
\address{Albert Ludwig University of Freiburg, Department of Mathematical Stochastics, Ernst-Zermelo-Stra\ss{}e 1, D-79104 Freiburg, Germany}
\email{stefan.tappe@math.uni-freiburg.de}
\date{21 April, 2021}
\thanks{I gratefully acknowledge financial support from the Deutsche Forschungsgemeinschaft (DFG, German Research Foundation) -- project number 444121509.}
\begin{abstract}
We provide an existence and uniqueness result for mild solutions to semilinear stochastic partial differential equations in the framework of the semigroup approach with locally monotone coefficients. An important component of the proof is an application of the dilation theorem of Nagy, which allows us to reduce the problem to infinite dimensional stochastic differential equations on a larger Hilbert space. Properties of the solutions like the Markov property are discussed as well.
\end{abstract}
\keywords{Stochastic partial differential equation, mild solution, monotonicity condition, coercivity condition, Markov property}
\subjclass[2020]{Primary 60H15; Secondary 60H10}

\maketitle\thispagestyle{empty}

\section{Introduction}

In this article we consider semilinear stochastic partial differential equations (SPDEs) in the framework of the semigroup approach driven by a cylindrical Wiener process; see, for example \cite{Da_Prato, Atma-book}. It is well-known that existence and uniqueness of mild solutions holds true for such SPDEs if the coefficients satisfy Lipschitz type conditions and the volatility only depends on the space variable; see, for example \cite[Thm. 7.5]{Da_Prato}. However, to the best of my knowledge, in the literature there is no existence and uniqueness result for mild solutions to SPDEs under local monotonicity and coercivity conditions on the coefficients. The goal of the present paper is to fill this gap and to present such a result; see Theorem \ref{thm-SPDE} below. Moreover, in contrast to the just cited result, the coefficients may depend on space, time and randomness.

We point out that existence and uniqueness of strong solutions under monotonicity and coercivity conditions is well-known in the framework of the variational approach; see, for example \cite{KR, RRW, Liu, LR-2010, LR-2013}, the textbooks \cite{Prevot-Roeckner, Liu-Roeckner} and the recent article \cite{Marinelli}, where an alternative proof is presented.

The essential idea for proving our existence and uniqueness result is to utilize the ``method of the moving frame''. This method has originally been presented in \cite{SPDE}; it provides a link between mild solutions to SPDEs and strong solutions to infinite dimensional stochastic differential equations (SDEs) on a larger Hilbert space, which ensures that we can use the results in the framework of the variational approach. This method relies on the dilation theorem of Nagy, which allows us to extend the semigroup to a group on the larger Hilbert space; see, for example \cite{Nagy, Davies}. Besides the aforementioned article \cite{SPDE}, the ``method of the moving frame'' has also been used in \cite{Tappe-refine} for certain existence and uniqueness results, and in \cite{Tappe-YW, Tappe-YW-dual} for Yamada-Watanabe type results regarding SPDEs. In the present paper, we will use it in order to establish our existence and uniqueness result, and also in order to obtain a short proof of the Markov property of the solutions.

The remainder of this paper is organized as follows. In Section \ref{sec-SPDE} we provide the announced existence and uniqueness result for SPDEs, and in Section \ref{sec-Markov} we prove the Markov property of solutions for non-random coefficients. For convenience of the reader, we provide the required results about infinite dimensional SDEs in Appendix \ref{sec-SDE}.

\section{Existence and uniqueness result}\label{sec-SPDE}

In this section, we provide the announced existence and uniqueness result for semilinear SPDEs with locally monotone coefficients. Let $T > 0$ be a finite time horizon, and let $(\Omega,\calf,(\calf_t)_{t \in [0,T]},\bbp)$ be a filtered probability space satisfying the usual conditions. Let $H$ and $U$ be separable Hilbert spaces, let $A$ be the generator of a $C_0$-semigroup $(S_t)_{t \geq 0}$ of contractions on $H$, and let $W = (W_t)_{t \in [0,T]}$ be a cylindrical Wiener process in $U$. We consider the $H$-valued SPDE
\begin{align}\label{SPDE}
dX_t = (A X_t + \alpha(t,X_t))dt + \sigma(t,X_t) dW_t
\end{align}
with progressively measurable coefficients
\begin{align*}
\alpha : [0,T] \times H \times \Omega \to H \quad \text{and} \quad \sigma : [0,T] \times H \times \Omega \to L_2(U,H),
\end{align*}
where $L_2(U,H)$ denotes the space of all Hilbert-Schmidt operators from $U$ to $H$. Given an $\calf_0$-measurable random variable $\xi : \Omega \to H$, an $H$-valued continuous, adapted process $X = (X_t)_{t \in [0,T]}$ is called a \emph{mild solution} to the SPDE (\ref{SPDE}) with $X_0 = \xi$ if we have $\bbp$-almost surely
\begin{align*}
\int_0^T \big( \| \alpha(s,X_s) \|_H + \| \sigma(s,X_s) \|_{L_2(U,H)}^2 \big) ds < \infty
\end{align*}
as well as
\begin{align*}
X_t = S_t \xi + \int_0^t S_{t-s} \alpha(s,X_s) ds + \int_0^t S_{t-s} \sigma(s,X_s) dW_s, \quad t \in [0,T].
\end{align*}
The following dilation of the semigroup $(S_t)_{t \geq 0}$ will be crucial for our analysis of the existence of mild solutions to the SPDE (\ref{SPDE}).

\begin{theorem}\label{thm-diagram}
There exist another separable Hilbert space $\calh$, a unitary $C_0$-group $(U_t)_{t \in \mathbb{R}}$ on $\calh$ and an isometric embedding $\ell \in L(H,\calh)$ such that the diagram
\[ \begin{CD}
\calh @>U_t>> \calh\\
@AA\ell A @VV\pi V\\
H @>S_t>> H
\end{CD} \]
commutes for every $t \in \mathbb{R}_+$, that is
\begin{align}\label{diagram-commutes}
\pi U_t \ell = S_t \quad \text{for all $t \in \mathbb{R}_+$,}
\end{align}
where $\pi := \ell^*$ is the orthogonal projection from $\calh$ into $H$.
\end{theorem}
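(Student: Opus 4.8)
The plan is to invoke the classical dilation theorem of Sz.-Nagy for contraction semigroups, which gives exactly the algebraic structure required, and then to verify the linear-algebraic and analytic details in the Hilbert space setting. First I would recall Nagy's theorem: if $(S_t)_{t \geq 0}$ is a $C_0$-semigroup of contractions on a Hilbert space $H$, then there is a larger Hilbert space $\calh$, a $C_0$-group $(U_t)_{t \in \bbr}$ of unitary operators on $\calh$, and a linear isometry $\ell : H \to \calh$ such that $S_t = \ell^* U_t \ell$ for all $t \geq 0$; moreover one may arrange that $\calh$ is the closed linear span of $\bigcup_{t \in \bbr} U_t \ell(H)$, which forces $\calh$ to be separable whenever $H$ is. I would cite \cite{Nagy, Davies} for this and, depending on the level of self-containedness desired, either stop there or sketch the construction (e.g.\ via the spectral/GNS-type dilation, or via Cooper's construction for the one-parameter case) to make the group property transparent.

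The next step is to identify $\pi$. Since $\ell$ is an isometric embedding, $\ell^* \ell = \Id_H$, so $P := \ell \ell^*$ is the orthogonal projection of $\calh$ onto the closed subspace $\ell(H)$, and $\pi := \ell^*$ restricted appropriately is (up to the identification of $\ell(H)$ with $H$) the orthogonal projection from $\calh$ onto $H$. With this notation the dilation identity $S_t = \ell^* U_t \ell$ is precisely $\pi U_t \ell = S_t$, which is (\ref{diagram-commutes}), and commutativity of the stated diagram for $t \in \bbr_+$ is immediate. I would also note that $U$ being a $C_0$-group of unitaries means each $U_t$ is unitary and $t \mapsto U_t x$ is continuous for every $x \in \calh$, so all the hypotheses asked for in the statement are met.

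The only genuine point requiring care — and the step I expect to be the main obstacle if one wants a fully self-contained argument — is the separability of $\calh$ together with the $C_0$ (strong continuity) property of the group on the dilation space. In Nagy's minimal dilation, $\calh$ is generated by $\{U_t \ell x : t \in \bbr,\ x \in H\}$; since $H$ is separable and $t \mapsto U_t \ell x$ is norm-continuous, the set $\{U_q \ell x_n : q \in \bbq,\ n \in \bbn\}$ is a countable dense subset, giving separability. Strong continuity of $(U_t)_{t \in \bbr}$ is part of the conclusion of Nagy's theorem (the dilation can be chosen so that $t \mapsto U_t$ is strongly continuous), but if one reconstructs the dilation by hand one must check it; this is routine but is the place where the construction has to be done, rather than merely quoted. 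I would therefore present the proof as: cite Nagy's dilation theorem for the existence of $(\calh, (U_t)_{t \in \bbr}, \ell)$ with strong continuity and the minimality property, deduce separability of $\calh$ from minimality and separability of $H$, set $\pi := \ell^*$, and observe that $\ell^* \ell = \Id_H$ turns the dilation identity into (\ref{diagram-commutes}).
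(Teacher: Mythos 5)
Your proposal is correct and follows the same route as the paper, which simply cites the Sz.-Nagy dilation theorem \cite[Thm.\ I.8.1]{Nagy} for contraction semigroups; your additional remarks on deducing separability of $\calh$ from the minimal dilation and on strong continuity are sensible elaborations of the same argument rather than a different approach.
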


\begin{proof}
Since $(S_t)_{t \geq 0}$ is a semigroup of contractions, this is an immediate consequence of \cite[Thm. I.8.1]{Nagy}.
\end{proof}

\begin{remark}\label{remark-group}
Since the group $(U_t)_{t \in \mathbb{R}}$ is unitary, we have $U_{-t} = U_t^*$ and $U_t$ is an isometry for all $t \in \bbr$.
\end{remark}

In order to establish our existence result, we will also consider the $\calh$-valued SDE (\ref{SDE}) with progressively measurable coefficients
\begin{align*}
a : [0,T] \times \calh \times \Omega \to \calh \quad \text{and} \quad b : [0,T] \times \calh \times \Omega \to L_2(U,\calh)
\end{align*}
given by
\begin{align}\label{def-ab}
a(t,v,\omega) := U_t^* \ell \alpha(t,\pi U_t v,\omega) \quad \text{and} \quad b(t,v,\omega) := U_t^* \ell \sigma(t,\pi U_t v,\omega).
\end{align}
For a continuous function $h : [0,T] \to \calh$ we define the continuous function $\pi U h : [0,T] \to H$ as
\begin{align*}
(\pi U h)(t) := \pi U_t h(t), \quad t \in [0,T].
\end{align*}
The following two auxiliary results show how mild solutions to the SPDE (\ref{SPDE}) and strong solutions to the SDE (\ref{SDE}) are related.

\begin{lemma}\cite[Cor. 3.9]{Tappe-YW}\label{lemma-trans-1}
Let $\xi : \Omega \to H$ be an $\calf_0$-measurable random variable, let $X$ be a mild solution to the SPDE (\ref{SPDE}) with $X_0 = \xi$, and set
\begin{align*}
Y := \ell \xi + \int_0^{\bullet} a(s,X_s) ds + \int_0^{\bullet} b(s,X_s) dW_s.
\end{align*}
Then the following statements are true:
\begin{enumerate}
\item $Y$ is a strong solution to the SDE (\ref{SDE}) with $Y_0 = \ell \xi$.

\item We have $\bbp$-almost surely $X = \pi U Y$.
\end{enumerate}
\end{lemma}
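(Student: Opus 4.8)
The plan is to verify both statements essentially by direct computation, using the defining integral equation for the mild solution $X$, the commutation relation (\ref{diagram-commutes}) from Theorem \ref{thm-diagram}, and the unitarity of the group $(U_t)_{t \in \bbr}$ recorded in Remark \ref{remark-group}. First I would check that the process $Y$ is well-defined: since $X$ is a mild solution, the integrability condition $\int_0^T ( \| \alpha(s,X_s) \|_H + \| \sigma(s,X_s) \|_{L_2(U,H)}^2 ) ds < \infty$ holds $\bbp$-almost surely, and because $U_t^* = U_{-t}$ and $\ell$ are isometries, we get $\| a(s,X_s) \|_{\calh} = \| \alpha(s,X_s) \|_H$ and $\| b(s,X_s) \|_{L_2(U,\calh)} = \| \sigma(s,X_s) \|_{L_2(U,H)}$, so the drift integral and the stochastic integral defining $Y$ make sense and $Y$ is a continuous, adapted $\calh$-valued process with $Y_0 = \ell \xi$. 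Thus statement (1) amounts to observing that $Y$ satisfies the integral equation that \emph{defines} a strong solution to the SDE (\ref{SDE}); this is immediate from the very definition of $Y$ together with the definition (\ref{def-ab}) of $a$ and $b$, since $a(s,X_s) = U_s^* \ell \alpha(s, \pi U_s X_s)$ and one will want $\pi U_s Y_s = X_s$, which is exactly statement (2).

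So the real content is statement (2), and I would prove it before (really: simultaneously with) (1). Fix $t \in [0,T]$ and compute $\pi U_t Y_t$. Applying $U_t$ to $Y_t = \ell \xi + \int_0^t U_s^* \ell \alpha(s,X_s) ds + \int_0^t U_s^* \ell \sigma(s,X_s) dW_s$ and pulling the bounded operator $U_t$ inside the (Bochner and stochastic) integrals gives $U_t Y_t = U_t \ell \xi + \int_0^t U_{t-s} \ell \alpha(s,X_s) ds + \int_0^t U_{t-s} \ell \sigma(s,X_s) dW_s$, using $U_t U_s^* = U_t U_{-s} = U_{t-s}$ from the group property. Now apply $\pi = \ell^*$ and use the commutation relation $\pi U_r \ell = S_r$ for $r \in \bbr_+$ (valid here since $t - s \geq 0$ for $s \in [0,t]$, and also for $r = t$): this yields $\pi U_t Y_t = S_t \xi + \int_0^t S_{t-s} \alpha(s,X_s) ds + \int_0^t S_{t-s} \sigma(s,X_s) dW_s = X_t$, the last equality being precisely the mild-solution equation for $X$. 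Since both $X$ and $\pi U Y$ are continuous, the identity for each fixed $t$ upgrades to $\bbp$-almost sure equality of the processes, giving (2); feeding $\pi U_s Y_s = X_s$ back into the definition of $Y$ and comparing with (\ref{def-ab}) then confirms that $Y$ solves (\ref{SDE}), completing (1).

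The only genuinely delicate point — and the step I expect to be the main obstacle to write cleanly — is the interchange of the bounded operators $U_t$ and $\pi$ with the Bochner integral and, especially, with the stochastic integral against the cylindrical Wiener process $W$. For the Bochner integral this is standard (Hille's theorem / dominated convergence), and for the stochastic integral one uses that applying a fixed bounded operator commutes with the It\^o integral for $L_2(U,\cdot)$-valued integrands; one must be slightly careful that $U_t$ is applied \emph{outside} the time integral (so it is a constant operator from the viewpoint of the $ds$- and $dW_s$-integration) and that the resulting integrand $U_{t-s}\ell\sigma(s,X_s)$ is again a progressively measurable $L_2(U,\calh)$-valued process with the required integrability, which follows from the isometry of $U_{t-s}$ and $\ell$. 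Since this lemma is quoted verbatim from \cite[Cor. 3.9]{Tappe-YW}, I would in practice simply cite that reference rather than reproduce these manipulations in full, but the sketch above is the argument underlying it.
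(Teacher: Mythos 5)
Your argument is correct and is essentially the same computation the paper relies on: the paper itself only cites \cite[Cor.~3.9]{Tappe-YW} for this lemma, but the identical manipulation (pulling $\pi U_t$ through the Bochner and stochastic integrals and using $\pi U_{t-s}\ell = S_{t-s}$) is carried out explicitly in the proof of Lemma \ref{lemma-X-Y-start-in-t}. Your ordering is also right: statement (2) is obtained first by direct computation, and (1) then follows by substituting $X_s = \pi U_s Y_s$ back into the definitions (\ref{def-ab}), with the integrability condition transferring via the isometry of $U_s^*\ell$.
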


\begin{lemma}\cite[Cor. 3.11]{Tappe-YW}\label{lemma-trans-2}
Let $\xi : \Omega \to H$ be an $\calf_0$-measurable random variable, and let $Y$ be a strong solution to the SDE (\ref{SDE}) with $Y_0 = \ell \xi$. Then the following statements are true:
\begin{enumerate}
\item $X := \pi U Y$ is a mild solution to the SPDE (\ref{SPDE}) with $X_0 = \xi$.

\item We have $\bbp$-almost surely
\begin{align*}
Y = \ell \xi + \int_0^{\bullet} a(s,X_s) ds + \int_0^{\bullet} b(s,X_s) dW_s.
\end{align*}
\end{enumerate}
\end{lemma}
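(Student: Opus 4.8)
The plan is to run the construction of Lemma \ref{lemma-trans-1} in reverse, using nothing more than the dilation of Theorem \ref{thm-diagram} together with the group property recorded in Remark \ref{remark-group}. Set $X := \pi U Y$, so that $X_t = \pi U_t Y_t$ for $t \in [0,T]$. Since $Y$ is continuous and adapted, the group $(U_t)_{t \in \bbr}$ is strongly continuous, and $\pi \in L(\calh,H)$, the process $X$ is continuous and adapted, hence progressively measurable; moreover $X_0 = \pi U_0 Y_0 = \pi \ell \xi = \xi$ because $\pi \ell = \ell^* \ell = \Id_H$. By (\ref{def-ab}) the integrands of the SDE (\ref{SDE}), evaluated along $Y$, are $a(s,Y_s) = U_s^* \ell \alpha(s,X_s)$ and $b(s,Y_s) = U_s^* \ell \sigma(s,X_s)$; since $U_s^* \ell$ is an isometry from $H$ into $\calh$, we get $\| a(s,Y_s) \|_{\calh} = \| \alpha(s,X_s) \|_H$ and $\| b(s,Y_s) \|_{L_2(U,\calh)} = \| \sigma(s,X_s) \|_{L_2(U,H)}$. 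Hence the integrability enjoyed by the strong solution $Y$ transfers verbatim to $\int_0^T ( \| \alpha(s,X_s) \|_H + \| \sigma(s,X_s) \|_{L_2(U,H)}^2 ) ds < \infty$ $\bbp$-almost surely, which is exactly the integrability required of a mild solution.

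For the integral equation I would start from the strong solution property rewritten as
\begin{align*}
Y_t = \ell \xi + \int_0^t U_s^* \ell \alpha(s,X_s) ds + \int_0^t U_s^* \ell \sigma(s,X_s) dW_s, \quad t \in [0,T],
\end{align*}
then fix $t$, apply the bounded operator $\pi U_t$ to both sides, and pull it inside the Bochner and the stochastic integral (both interchanges being the standard fact that bounded linear operators commute with these integrals). This yields
\begin{align*}
X_t = \pi U_t \ell \xi + \int_0^t \pi U_t U_s^* \ell \alpha(s,X_s) ds + \int_0^t \pi U_t U_s^* \ell \sigma(s,X_s) dW_s.
\end{align*}
The crux is the identity $\pi U_t U_s^* \ell = \pi U_{t-s} \ell = S_{t-s}$ for $0 \le s \le t$: the first equality is the group relation $U_s^* = U_{-s}$ from Remark \ref{remark-group}, and the second is (\ref{diagram-commutes}), which applies precisely because $t - s \in \mathbb{R}_+$ on the range of integration. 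Combined with $\pi U_t \ell \xi = S_t \xi$, this gives the mild solution equation
\begin{align*}
X_t = S_t \xi + \int_0^t S_{t-s} \alpha(s,X_s) ds + \int_0^t S_{t-s} \sigma(s,X_s) dW_s, \quad t \in [0,T],
\end{align*}
which proves part (1).

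Part (2) is then essentially bookkeeping: with $X = \pi U Y$ already identified, the computation above shows $a(s,Y_s) = U_s^* \ell \alpha(s,X_s)$ and $b(s,Y_s) = U_s^* \ell \sigma(s,X_s)$, i.e. the integrands in the defining equation for $Y$ coincide with the ones appearing in Lemma \ref{lemma-trans-1} expressed in terms of $X$; hence $Y = \ell \xi + \int_0^{\bullet} a(s,X_s) ds + \int_0^{\bullet} b(s,X_s) dW_s$ $\bbp$-almost surely. I expect the only point requiring genuine care to be the index bookkeeping in the interchange step of part (1); the remaining items — continuity and progressive measurability of $X$, the equivalence of the two integrability conditions, and the identification of the integrands for part (2) — are routine once the dilation diagram is available.
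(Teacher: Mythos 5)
Your argument is correct and is essentially the proof of the cited result \cite[Cor.~3.11]{Tappe-YW}, which the paper invokes without reproducing: apply the bounded operator $\pi U_t$ to the strong-solution identity, commute it with the Bochner and stochastic integrals, and use $U_t U_s^* = U_{t-s}$ together with $\pi U_{t-s}\ell = S_{t-s}$ for $0 \le s \le t$ and $\pi\ell = \Id_H$. The remaining observations (continuity and adaptedness of $\pi U Y$, the transfer of the integrability condition via the isometry $U_s^*\ell$, and the identification $a(s,Y_s) = U_s^*\ell\,\alpha(s,X_s)$ giving part (2)) are exactly the routine bookkeeping you describe, so nothing is missing.
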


\begin{assumption}\label{ass-SPDE}
We assume there are constants $\beta, C_0, \theta \in \bbr_+$ such that $C_0 \geq \theta > 0$ and a nonnegative adapted process $f \in L^1(\Omega \times [0,T]; dt \otimes \bbp)$ such that for all $x,y,z \in H$ and all $(t,\omega) \in [0,T] \times \Omega$ the following conditions are fulfilled:
\begin{enumerate}
\item[(H1)] (Hemicontinuity) The map $\lambda \mapsto \la \alpha(t,x+\lambda y,\omega),z \ra_H$ is continuous on $\bbr$.

\item[(H2')] (Local monotonicity) We have
\begin{align*}
&2\la \alpha(t,x,\omega) - \alpha(t,y,\omega), x-y \ra_H + \| \sigma(t,x,\omega) - \sigma(t,y,\omega) \|_{L_2(U,H)}^2
\\ &\leq (f(t,\omega) + \tau(\| y \|_H)) \| x-y \|_H^2,
\end{align*}
where $\tau : \bbr_+ \to \bbr_+$ is a continuous, increasing function.

\item[(H3)] (Coercivity) We have
\begin{align*}
2 \la \alpha(t,y,\omega),y \ra_H + \| \sigma(t,y,\omega) \|_{L_2(U,H)}^2 \leq (C_0-\theta) \| y \|_H^2 + f(t,\omega).
\end{align*}
\item[(H4')] (Growth) We have
\begin{align*}
\| \alpha(t,y,\omega) \|_H^2 \leq ( f(t,\omega) + C_0 \| y \|_H^2 ) (1 + \| y \|_H^{\beta}).
\end{align*}
\end{enumerate}
\end{assumption}

Here is our result concerning existence and uniqueness of mild solutions to the SPDE (\ref{SPDE}).

\begin{theorem}\label{thm-SPDE}
Suppose that Assumption \ref{ass-SPDE} is satisfied for some $f \in L^{p/2}([0,T] \times \Omega; dt \otimes \bbp)$ with some $p \geq \beta + 2$, and that there is a constant $C \in \bbr_+$ such that
\begin{align}\label{sigma-zusatz-bed}
\| \sigma(t,y,\omega) \|_{L_2(U,H)}^2 &\leq C ( f(t,\omega) + \| y \|_H^2 ), \quad (t,y,\omega) \in [0,T] \times H \times \Omega,
\\ \tau(r) &\leq C(1 + r^2) (1 + r^{\beta}), \quad r \in \bbr_+.
\end{align}
Then the following statements are true:
\begin{enumerate}
\item For each $\calf_0$-measurable random variable $\xi : \Omega \to \calh$ there is a unique mild solution $X$ to the SPDE (\ref{SPDE}) with $X_0 = \xi$.

\item If $\xi \in L^p(\Omega,\calf_0,\bbp;H)$, then we have
\begin{align*}
\bbe \bigg[ \sup_{t \in [0,T]} \| X_t \|_H^p \bigg] < \infty.
\end{align*}

\item There is an increasing function $K : \bbr_+ \to \bbr_+$ such that for every $\xi \in L^2(\Omega,\calf_0,\bbp;H)$ we have
\begin{align*}
\bbe \big[ \| X_t \|_H^2 \big] \leq K(t) \big( 1 + \bbe[\| \xi \|_H^2] \big), \quad t \in [0,T],
\end{align*}
where $X$ denotes the mild solution to the SPDE (\ref{SPDE}) with $X_0 = \xi$.

\item If $f \in L^{p/2}([0,T])$ is deterministic and $\tau \in \bbr_+$ is a constant, then for each $t \in [0,T]$ the solution map
\begin{align*}
L^2(\Omega,\calf_0,\bbp;H) \to L^2(\Omega,\calf_t,\bbp;H), \quad \xi \mapsto X_t(\xi),
\end{align*}
where $X(\xi)$ denotes the strong solution to the SPDE (\ref{SPDE}) with $X_0(\xi) = \xi$, is Lipschitz continuous.
\end{enumerate}
\end{theorem}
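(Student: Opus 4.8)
The plan is to reduce everything to the $\calh$-valued SDE (\ref{SDE}) by means of the dilation from Theorem \ref{thm-diagram} together with Lemmas \ref{lemma-trans-1} and \ref{lemma-trans-2}, and then to apply the existence, uniqueness and moment estimates for infinite dimensional SDEs under (local) monotonicity and coercivity conditions collected in Appendix \ref{sec-SDE}. The first and main step is to verify that the coefficients $a,b$ from (\ref{def-ab}) satisfy the hypotheses required there, with the \emph{same} constants $\beta,C_0,\theta$, the same process $f$ and the same function $\tau$. The crucial observation is that, by Remark \ref{remark-group}, the operator $U_t^*\ell \in L(H,\calh)$ is an isometric embedding with adjoint $\pi U_t$, and that $\pi U_t \in L(\calh,H)$ is a contraction (a composition of the unitary $U_t$ with the orthogonal projection $\pi$). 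Hence, for $v,w\in\calh$ and $x:=\pi U_t v$, $y:=\pi U_t w$, a short computation using $\langle U_t^* z,u\rangle_\calh = \langle z,U_t u\rangle_\calh$ and $\langle \ell h,g\rangle_\calh = \langle h,\pi g\rangle_H$ gives
\[
\langle a(t,v,\omega)-a(t,w,\omega),v-w\rangle_\calh = \langle \alpha(t,x,\omega)-\alpha(t,y,\omega),x-y\rangle_H,
\]
\[
\| b(t,v,\omega)-b(t,w,\omega)\|_{L_2(U,\calh)}^2 = \|\sigma(t,x,\omega)-\sigma(t,y,\omega)\|_{L_2(U,H)}^2,
\]
and likewise $\langle a(t,w,\omega),w\rangle_\calh = \langle \alpha(t,y,\omega),y\rangle_H$, $\|b(t,w,\omega)\|_{L_2(U,\calh)}^2 = \|\sigma(t,y,\omega)\|_{L_2(U,H)}^2$ and $\|a(t,w,\omega)\|_\calh = \|\alpha(t,y,\omega)\|_H$.

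Since $\|x-y\|_H = \|\pi U_t(v-w)\|_H \le \|v-w\|_\calh$ and $\|y\|_H \le \|w\|_\calh$, since $\tau$ is increasing, and since the quantities $C_0-\theta \ge 0$, $C \ge 0$ and $f(t,\omega)\ge 0$ appearing on the right-hand sides of (H2')--(H4') and (\ref{sigma-zusatz-bed}) are nonnegative, each of (H1), (H2'), (H3), (H4') and the additional conditions (\ref{sigma-zusatz-bed}) is inherited by $a,b$ on $\calh$ (for hemicontinuity one substitutes $x=\pi U_t v$, $y=\pi U_t u$, $z=\pi U_t w$ in the analogous way). I expect this transfer of hypotheses to be the main, though essentially routine, obstacle: the only point needing care is that every inequality is applied in precisely the direction in which the contraction estimate $\|\pi U_t\,\cdot\,\|_H \le \|\cdot\|_\calh$ and the monotonicity of $\tau$ work in our favour. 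Granting this, statement (i) follows at once: for an $\calf_0$-measurable $\xi:\Omega\to H$ we have $\ell\xi$ $\calf_0$-measurable with $\|\ell\xi\|_\calh = \|\xi\|_H$, so the SDE (\ref{SDE}) has a unique strong solution $Y$ with $Y_0=\ell\xi$; then $X:=\pi U Y$ is a mild solution of (\ref{SPDE}) with $X_0=\xi$ by Lemma \ref{lemma-trans-2}, and conversely any mild solution $X$ with $X_0=\xi$ yields, via Lemma \ref{lemma-trans-1}, a strong solution $Y$ of (\ref{SDE}) with $Y_0=\ell\xi$ and $X=\pi U Y$ a.s., so uniqueness of $X$ is forced by uniqueness of $Y$.

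For (ii) and (iii) I would simply transport the corresponding moment bounds for $Y$ from Appendix \ref{sec-SDE}, using that pointwise in $(t,\omega)$ one has $\|X_t\|_H = \|\pi U_t Y_t\|_H \le \|Y_t\|_\calh$, hence $\sup_{t\in[0,T]}\|X_t\|_H^p \le \sup_{t\in[0,T]}\|Y_t\|_\calh^p$ and $\bbe[\|X_t\|_H^2] \le \bbe[\|Y_t\|_\calh^2]$; since $\bbe[\|\ell\xi\|_\calh^r] = \bbe[\|\xi\|_H^r]$, the hypotheses $p\ge\beta+2$ together with $\xi\in L^p$ (resp.\ $\xi\in L^2$) give exactly the asserted finiteness in (ii) and the Gronwall-type bound in (iii), with $K$ inherited from the SDE estimate.

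Finally, for (iv): when $\tau\in\bbr_+$ is constant and $f$ is deterministic, (H2') is a genuine global monotonicity condition with deterministic integrable coefficient $f(t)+\tau$. Given $\xi,\eta\in L^2(\Omega,\calf_0,\bbp;H)$, let $Y,\tilde Y$ be the associated strong solutions of (\ref{SDE}), let $Z:=Y-\tilde Y$, and set $X:=\pi U Y$, $\tilde X:=\pi U\tilde Y$. Applying It\^o's formula to $\|Z_t\|_\calh^2$ (with a standard localization to kill the stochastic integral, legitimate since $Z$ has continuous paths and $b$ has the required integrability), taking expectations, and using the identities above one gets that the drift-plus-trace term equals $2\langle\alpha(s,X_s)-\alpha(s,\tilde X_s),X_s-\tilde X_s\rangle_H + \|\sigma(s,X_s)-\sigma(s,\tilde X_s)\|_{L_2(U,H)}^2 \le (f(s)+\tau)\|X_s-\tilde X_s\|_H^2 \le (f(s)+\tau)\|Z_s\|_\calh^2$, so that
\[
\bbe\big[\|Z_t\|_\calh^2\big] \le \bbe\big[\|\xi-\eta\|_H^2\big] + \int_0^t (f(s)+\tau)\,\bbe\big[\|Z_s\|_\calh^2\big]\,ds .
\]
Gronwall's lemma yields $\bbe[\|Z_t\|_\calh^2] \le e^{\int_0^T (f(s)+\tau)\,ds}\,\bbe[\|\xi-\eta\|_H^2]$, and since $\|X_t(\xi)-X_t(\eta)\|_H = \|\pi U_t Z_t\|_H \le \|Z_t\|_\calh$, the map $\xi\mapsto X_t(\xi)$ is Lipschitz from $L^2(\Omega,\calf_0,\bbp;H)$ to $L^2(\Omega,\calf_t,\bbp;H)$ with constant $\exp\big(\tfrac12\int_0^T (f(s)+\tau)\,ds\big)$.
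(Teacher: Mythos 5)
Your proposal is correct and follows essentially the same route as the paper: you verify that the coefficients $a,b$ from (\ref{def-ab}) inherit Assumption \ref{ass-SDE} and condition (\ref{b-zusatzbedingung}) via the isometry $U_t^*\ell$ and the contraction $\pi U_t$, and then invoke Theorem \ref{thm-SDE} together with Lemmas \ref{lemma-trans-1} and \ref{lemma-trans-2}. Your explicit It\^{o}/Gronwall argument for statement (iv) merely unfolds the Lipschitz estimate already contained in Theorem \ref{thm-SDE}(iv), so it is not a genuinely different approach.
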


\begin{proof}
Using identity (\ref{diagram-commutes}), Remark \ref{remark-group}, the definitions (\ref{def-ab}) and Assumption \ref{ass-SPDE}, we will verify that Assumption \ref{ass-SDE} is fulfilled. Let $u,v,w \in \calh$ and $(t,\omega) \in [0,T] \times \Omega$ be arbitrary.
\begin{enumerate}
\item[(H1)] (Hemicontinuity) The map
\begin{align*}
\lambda \mapsto \la a(t,u+\lambda v,\omega),w \ra_{\calh} &= \la U_t^* \ell \alpha(t,\pi U_t(u+\lambda v),\omega),w \ra_{\calh}
\\ &= \la \alpha(t,\pi U_t u + \lambda \pi U_t v,\omega), \pi U_t w \ra_H
\end{align*}
is continuous on $\bbr$. 

\item[(H2')] (Local monotonicity) We have
\begin{align*}
&2 \la a(t,u,\omega) - a(t,v,\omega), u-v \ra_{\calh} + \| b(t,u,\omega) - b(t,v,\omega) \|_{L_2(U,\calh)}^2
\\ &=2 \la U_t^* \ell \alpha(t,\pi U_t u,\omega) - U_t^* \ell \alpha(t,\pi U_t v,\omega), u-v \ra_{\calh}
\\ &\quad + \| U_t^* \ell \sigma(t, \pi U_t u,\omega) - U_t^* \ell \sigma(t,\pi U_t v,\omega) \|_{\calh}^2
\\ &= 2 \la \alpha(t,\pi U_t u,\omega) - \alpha(t,\pi U_t v,\omega), \pi U_t(u-v) \ra_H + \| \sigma(t,\pi U_t u,\omega) - \sigma(t,\pi U_t v,\omega) \|_H^2
\\ &\leq (f(t,\omega) + \tau(\| \pi U_t v \|_H) \| \pi U_t u - \pi U_t v \|_H^2
\\ &\leq (f(t,\omega) + \tau(\| v \|_{\calh}) \| u - v \|_{\calh}^2.
\end{align*}

\item[(H3)] (Coercivity) We have
\begin{align*}
&2 \la a(t,v,\omega),v \ra_{\calh} + \| b(t,v,\omega) \|_{L_2(U,\calh)}^2
\\ &= 2 \la U_t^* \ell \alpha(t,\pi U_t v,\omega), v \ra_{\calh} + \| U_t^* \ell \sigma(t,\pi U_t v,\omega) \|_{L_2(U,\calh)}^2
\\ &= 2 \la \alpha(t,\pi U_t v,\omega), \pi U_t v \ra_H + \| \sigma(t,\pi U_t v,\omega) \|_{L_2(U,H)}^2
\\ &\leq (C_0 - \theta) \| \pi U_t v \|_H^2 + f(t,\omega)
\\ &\leq (C_0 - \theta) \| v \|_{\calh}^2 + f(t,\omega).
\end{align*}

\item[(H4')] (Growth) We have
\begin{align*}
\| a(t,v,\omega) \|_{\calh}^2 &= \| U_t^* \ell \alpha(t,\pi U_t v,\omega) \|_{\calh}^2 = \| \alpha(t,\pi U_t v,\omega) \|_H^2
\\ &\leq (f(t,\omega) + C_0 \| \pi U_t v \|_H^2) (1 + \| \pi U_t v \|_H^{\beta})
\\ &\leq (f(t,\omega) + C_0 \| v \|_{\calh}^2) (1 + \| v \|_{\calh}^{\beta}).
\end{align*}
\end{enumerate}
Furthermore, by (\ref{sigma-zusatz-bed}) we have
\begin{align*}
\| b(t,v,\omega) \|_{L_2(U,\calh)}^2 &= \| U_t^* \ell \sigma(t,\pi U_t v,\omega) \|_{L_2(U,\calh)}^2 = \| \sigma(t,\pi U_t v,\omega) \|_{L_2(U,H)}^2
\\ &\leq C(f(t,\omega) + \| \pi U_t v \|_H^2) \leq C(f(t,\omega) + \| v \|_{\calh}^2),
\end{align*}
showing (\ref{b-zusatzbedingung}). Consequently, all assumptions from Theorem \ref{thm-SDE} are fulfilled. Therefore, noting Remark \ref{remark-group} and Lemmas \ref{lemma-trans-1}, \ref{lemma-trans-2}, all statements immediately follow.
\end{proof}

\section{Markov property of solutions}\label{sec-Markov}

In this section, we establish the Markov property of solutions for non-random coefficients. More precisely, we consider the framework from Section \ref{sec-SPDE} and assume that the coefficients do not depend on $\omega \in \Omega$; that is, we have measurable mappings
\begin{align*}
\alpha : [0,T] \times H \to H \quad \text{and} \quad b : [0,T] \times H \to L_2(U,H).
\end{align*}
Furthermore, we assume that Assumption \ref{ass-SPDE} is satisfied for some deterministic $f \in L^{p/2}([0,T])$ with some $p \geq \beta + 2$, that $\tau \in \bbr_+$ is a constant, and that there is a constant $C \in \bbr_+$ such that
\begin{align}
\| \sigma(t,y) \|_{L_2(U,H)}^2 &\leq C ( f(t) + \| y \|_H^2 ), \quad (t,y) \in [0,T] \times H.
\end{align}
According to Theorem \ref{thm-SPDE}, for each $s \in [0,T]$ and each $\calf_s$-measurable random variable $\xi : \Omega \to H$ there exists a unique solution $X = X(s,\xi)$ to the equation
\begin{align*}
X_t = S_{t-s} \xi + \int_s^t S_{t-u} \alpha(u,X_u) du + \int_s^t S_{t-u} \sigma(u,X_u) dW_u, \quad t \in [s,T]
\end{align*}
with underlying Wiener process $W_t - W_s$, $t \in [s,T]$ and filtration $(\calf_t)_{t \geq s}$. Using the uniqueness of solutions, for all $0 \leq r \leq s \leq t \leq T$ and every $\calf_r$-measurable random variable $\xi : \Omega \to H$ we have the flow property
\begin{align}\label{flow-property}
X_t(r,\xi) = X_t(s,X_s(r,\xi)).
\end{align}
For the next auxiliary result, recall the $\calh$-valued SDE (\ref{SDE}) with coefficients
\begin{align*}
a : [0,T] \times \calh \to \calh \quad \text{and} \quad b : [0,T] \times \calh \to L_2(U,\calh)
\end{align*}
defined according to (\ref{def-ab}).

\begin{lemma}\label{lemma-X-Y-start-in-t}
For all $s \in [0,T]$ and every $\calf_s$-measurable random variable $\xi : \Omega \to H$ we have
\begin{align*}
X_t(s,\xi) = \pi U_t Y_t(s, U_s^* \ell \xi), \quad t \in [s,T].
\end{align*}
\end{lemma}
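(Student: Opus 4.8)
The plan is to reduce the claimed identity to the two transfer lemmas (Lemmas \ref{lemma-trans-1} and \ref{lemma-trans-2}) applied on the time interval $[s,T]$ rather than $[0,T]$, by recognizing that the SDE \eqref{SDE} is, up to the unitary shift $U_s$, translation-invariant in the relevant sense. First I would spell out what ``$Y(s,\eta)$'' should mean: for an $\calf_s$-measurable $\eta : \Omega \to \calh$, let $Y(s,\eta)$ be the unique strong solution on $[s,T]$ of \eqref{SDE} started at time $s$ from $\eta$, driven by the increments $W_t - W_s$ and the filtration $(\calf_t)_{t \geq s}$; existence and uniqueness on this interval follow from Theorem \ref{thm-SDE} exactly as they do on $[0,T]$, since Assumption \ref{ass-SDE} was already verified in the proof of Theorem \ref{thm-SPDE} and is insensitive to the choice of initial time.

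Next I would run the argument of Lemma \ref{lemma-trans-2} verbatim but on $[s,T]$: set $Z := \pi U Y(s, U_s^* \ell \xi)$, so that $Z_s = \pi U_s U_s^* \ell \xi = \pi \ell \xi = \xi$ using that $U_s$ is unitary (Remark \ref{remark-group}) and $\pi = \ell^*$ with $\ell$ isometric. The same computation that proves Lemma \ref{lemma-trans-2}(1) — differentiating $\pi U_t Y_t$, using $\partial_t U_t = U_t A_{\calh}$ on the generator domain in the weak/mild sense, the commutation \eqref{diagram-commutes}, and the definitions \eqref{def-ab} of $a$ and $b$ — shows that $Z$ solves the mild equation
\begin{align*}
Z_t = S_{t-s}\xi + \int_s^t S_{t-u}\alpha(u,Z_u)\, du + \int_s^t S_{t-u}\sigma(u,Z_u)\, dW_u, \quad t \in [s,T],
\end{align*}
with the shifted Wiener process and filtration. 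By the uniqueness part of Theorem \ref{thm-SPDE} (equivalently, by the definition of $X(s,\xi)$), we conclude $Z = X(s,\xi)$ on $[s,T]$, which is exactly the assertion $X_t(s,\xi) = \pi U_t Y_t(s, U_s^* \ell \xi)$.

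The only genuine subtlety — and the step I would be most careful about — is justifying that Lemmas \ref{lemma-trans-1}--\ref{lemma-trans-2}, which are quoted from \cite{Tappe-YW} for equations started at time $0$, apply on the subinterval $[s,T]$ with the initial time $s$ and the coefficients $a,b$ from \eqref{def-ab} unchanged. This is a matter of observing that the ``moving frame'' identities in \cite{Tappe-YW} are local in time: the proof of Lemma \ref{lemma-trans-2} only uses the group law $U_{t-u} = U_t U_u^*$ and the commutation \eqref{diagram-commutes}, both of which hold for all real indices, so replacing $[0,t]$ by $[s,t]$ and the starting point $\ell\xi$ by $U_s^* \ell \xi$ costs nothing; the factor $U_s^*$ is precisely what makes $\pi U_s (U_s^* \ell \xi) = \xi$ come out right, matching the convention $Y_0 = \ell\xi$ used at initial time $0$. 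Once this bookkeeping is in place the lemma is immediate, and I would present it as a short corollary of Lemma \ref{lemma-trans-2} applied on $[s,T]$ together with the uniqueness statement in Theorem \ref{thm-SPDE}.
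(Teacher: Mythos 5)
Your proposal is correct and follows essentially the same route as the paper: the paper likewise applies $\pi U_t$ directly to the defining integral equation of $Y(s,U_s^*\ell\xi)$ on $[s,T]$, uses the group law $U_tU_u^* = U_{t-u}$ together with the commutation relation \eqref{diagram-commutes} to produce the mild equation for the SPDE started at $\xi$ at time $s$, and concludes by uniqueness. The only cosmetic difference is that the paper carries out this substitution as a one-line algebraic computation rather than invoking Lemma \ref{lemma-trans-2} on the shifted interval (and no differentiation of $U_t$ is needed, only the purely algebraic identities you already cite).
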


\begin{proof}
Using the notation $Y = Y(s,U_s^* \ell \xi)$, by (\ref{diagram-commutes}) for all $t \in [s,T]$ we obtain
\begin{align*}
\pi U_{t} Y_t &= \pi U_{t} \bigg( U_s^* \ell \xi + \int_s^t a(u,Y_u) du + \int_s^t b(u,Y_u) dW_u \bigg)
\\ &= S_{t-s} \xi + \pi U_{t} \int_s^t U_{u}^* \ell \alpha(u,\pi U_u Y_u) du + \pi U_{t} \int_s^t U_{u}^* \ell \sigma(u,\pi U_u Y_u) d W_u
\\ &= S_{t-s} \xi + \int_s^t S_{t-u} \alpha(u,\pi U_u Y_u) du + \int_s^t S_{t-u} \sigma(u,\pi U_u Y_u) dW_u.
\end{align*}
By the uniqueness of solutions, this completes the proof.
\end{proof}

We denote by $B_b(H)$ the Banach space of all bounded Borel functions $\varphi : H \to \bbr$, endowed with the supremum norm. Similarly, we denote by $C_b(H)$ the Banach space of all bounded and continuous functions $\varphi : H \to \bbr$. For $0 \leq s \leq t \leq T$ we denote by $P_{s,t} \in L(B_b(H))$ the bounded linear operator defined as
\begin{align*}
(P_{s,t} \varphi)(x) := \bbe[\varphi(X_t(s,x))], \quad x \in H
\end{align*}
for each $\varphi \in B_b(H)$.

\begin{theorem}[Markov property]\label{thm-Markov-prop}
For all $0 \leq r \leq s \leq t \leq T$, every $\calf_r$-measurable random variable $\xi : \Omega \to H$ and every $\varphi \in B_b(H)$ we have $\bbp$-almost surely
\begin{align*}
\bbe[\varphi(X_t(r,\xi)) | \calf_s] = (P_{s,t} \varphi)(X_s(r,\xi)).
\end{align*}
\end{theorem}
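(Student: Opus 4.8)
The plan is to reduce the Markov property for the SPDE \eqref{SPDE} to the Markov property for the $\calh$-valued SDE \eqref{SDE}, which we may take for granted from the theory in the variational framework (or from the appendix), and then to transfer it through the dilation via Lemma \ref{lemma-X-Y-start-in-t}. The key point is that the transformation $\xi \mapsto U_s^* \ell \xi$ intertwines starting values of the SPDE at time $s$ with starting values of the SDE at time $s$, and that $\pi U_t$ recovers $X_t$ from $Y_t$; both operations are deterministic and $\calf_s$-measurable in the appropriate sense.

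\begin{proof}
Fix $0 \le r \le s \le t \le T$, an $\calf_r$-measurable $\xi : \Omega \to H$ and $\varphi \in B_b(H)$. By the flow property \eqref{flow-property} we have $X_t(r,\xi) = X_t(s, X_s(r,\xi))$, so it suffices to show that for every $\calf_s$-measurable random variable $\eta : \Omega \to H$ we have $\bbp$-almost surely
\begin{align}\label{eq-Markov-reduced}
\bbe[\varphi(X_t(s,\eta)) \mid \calf_s] = (P_{s,t}\varphi)(\eta).
\end{align}
By Lemma \ref{lemma-X-Y-start-in-t}, $X_t(s,\eta) = \pi U_t Y_t(s, U_s^* \ell \eta)$, and likewise, applying Lemma \ref{lemma-X-Y-start-in-t} with the deterministic starting point $x \in H$, $X_t(s,x) = \pi U_t Y_t(s, U_s^* \ell x)$, so that
\begin{align*}
(P_{s,t}\varphi)(x) = \bbe\big[ \varphi(\pi U_t Y_t(s, U_s^* \ell x)) \big] = \bbe\big[ \psi(Y_t(s, U_s^* \ell x)) \big],
\end{align*}
where $\psi := \varphi \circ \pi U_t \in B_b(\calh)$. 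The $\calh$-valued SDE \eqref{SDE} satisfies the assumptions of Theorem \ref{thm-SDE}, hence possesses unique solutions, obeys the corresponding flow property, and its transition kernels satisfy the Markov property: for every $\calf_s$-measurable $\zeta : \Omega \to \calh$,
\begin{align*}
\bbe\big[ \psi(Y_t(s,\zeta)) \mid \calf_s \big] = \bbe\big[ \psi(Y_t(s,z)) \big]\big|_{z = \zeta} \quad \bbp\text{-a.s.}
\end{align*}
Applying this with $\zeta := U_s^* \ell \eta$, which is $\calf_s$-measurable since $U_s^* \ell \in L(H,\calh)$ is a fixed bounded operator, we obtain $\bbp$-almost surely
\begin{align*}
\bbe\big[ \varphi(X_t(s,\eta)) \mid \calf_s \big] = \bbe\big[ \psi(Y_t(s, U_s^*\ell\eta)) \mid \calf_s \big] = \bbe\big[ \psi(Y_t(s,z)) \big]\big|_{z = U_s^*\ell\eta} = (P_{s,t}\varphi)(\eta),
\end{align*}
which is \eqref{eq-Markov-reduced}. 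Combining this with the flow property \eqref{flow-property} applied to $\eta := X_s(r,\xi)$ completes the proof.
\end{proof}

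The main obstacle I anticipate is justifying the Markov property for the $\calh$-valued SDE \eqref{SDE} itself, together with the measurability of the map $x \mapsto \bbe[\psi(Y_t(s,x))]$; this is the statement that all the real work rests on. For time-homogeneous SDEs with Lipschitz coefficients this is standard, but here the coefficients $a,b$ from \eqref{def-ab} are time-dependent and only locally monotone, so one must invoke the version of the Markov property valid in the variational setting under monotonicity and coercivity — presumably recorded in Theorem \ref{thm-SDE} in the appendix or cited from \cite{Prevot-Roeckner, Liu-Roeckner}. A secondary technical point is that the solution $Y(s,\zeta)$ starting from a random $\calf_s$-measurable initial condition is well-defined and agrees $\bbp$-almost surely with $Y(s,z)|_{z=\zeta}$ in the sense required for conditioning; this follows from uniqueness together with a standard approximation of $\zeta$ by simple random variables and a monotone-class argument in $\psi$, exactly as in the finite-dimensional case, so it is routine once the building blocks are in place.
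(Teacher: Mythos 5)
Your proposal is correct and follows essentially the same route as the paper: reduce via the flow property (\ref{flow-property}) to the statement for a start at time $s$, transfer to the $\calh$-valued SDE through Lemma \ref{lemma-X-Y-start-in-t} with $\psi = \varphi \circ \pi U_t \in B_b(\calh)$, and invoke the Markov property of the SDE, which the paper records as Proposition \ref{prop-Markov-SDE} in the appendix. The one ingredient you flag as a possible obstacle is exactly that proposition, so there is no gap.
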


\begin{proof}
Defining the $\calf_s$-measurable random variable $\vartheta := X_s(r,\xi)$, by the flow property (\ref{flow-property}) we have to show that $\bbp$-almost surely
\begin{align*}
\bbe[\varphi(X_t(s,\vartheta)) | \calf_s] = \bbe[\varphi(X_t(s,x))]|_{x = \vartheta}.
\end{align*}
Indeed, noting that $\varphi \circ \pi \circ U_t \in B_b(\calh)$, by Lemma \ref{lemma-X-Y-start-in-t} and Proposition \ref{prop-Markov-SDE} we obtain $\bbp$-almost surely
\begin{align*}
\bbe[\varphi(X_t(s,\vartheta)) | \calf_s] &= \bbe[\varphi(\pi U_t Y_t(s,U_s^* \ell \vartheta))|\calf_s]
\\ &= \bbe[\varphi(\pi U_t Y_t(s,y))]|_{y = U_s^* \ell \vartheta}
\\ &= \bbe[\varphi(\pi U_t Y_t(s,U_s^* \ell x))]|_{x = \vartheta}
\\ &= \bbe[\varphi(X_t(s,x))]|_{x = \vartheta},
\end{align*}
completing the proof.
\end{proof}

\begin{remark}
We can provide an alternative proof of Theorem \ref{thm-Markov-prop} by following the proof of \cite[Prop. 4.3.3]{Liu-Roeckner} and using the Yamada-Watanabe theorem for mild solutions to semilinear SPDEs, which has been presented in \cite{Tappe-YW}.
\end{remark}

Consequently, for all $0 \leq r \leq s \leq t \leq T$ we have $P_{r,s} P_{s,t} = P_{r,t}$. Furthermore, defining for $0 \leq s \leq t \leq T$ and $x \in H$ the transition function $P(s,x;t,\cdot)$ as the probability measure
\begin{align*}
P(s,x;t,\Gamma) := (P_{s,t} \bbI_{\Gamma})(x), \quad \Gamma \in \calb(H),
\end{align*}
we deduce the Chapman-Kolmogorov equation
\begin{align*}
P(r,x;t,\Gamma) = \int_H P(r,x;s,dy) P(s,y;t,\Gamma), \quad \Gamma \in \calb(H)
\end{align*}
for all $0 \leq r \leq s \leq t \leq T$ and $x \in H$.

\begin{appendix}

\section{Infinite dimensional stochastic differential equations}\label{sec-SDE}

In this appendix, we provide the required results about infinite dimensional SDEs. Let $T > 0$ be a finite time horizon, and let $(\Omega,\calf,(\calf_t)_{t \in [0,T]},\bbp)$ be a filtered probability space satisfying the usual conditions. Let $\calh$ and $U$ be separable Hilbert spaces, and let $W = (W_t)_{t \in [0,T]}$ be a cylindrical Wiener process in $U$. We consider the $\calh$-valued SDE
\begin{align}\label{SDE}
dY_t = a(t,Y_t) dt + b(t,Y_t)dW_t
\end{align}
with progressively measurable coefficients
\begin{align*}
a : [0,T] \times \calh \times \Omega \to \calh \quad \text{and} \quad b : [0,T] \times \calh \times \Omega \to L_2(U,\calh).
\end{align*}
Given an $\calf_0$-measurable random variable $\eta : \Omega \to \calh$, an $\calh$-valued continuous, adapted process $Y = (Y_t)_{t \in [0,T]}$ is called a \emph{strong solution} to the SDE (\ref{SDE}) with $Y_0 = \eta$ if we have $\bbp$-almost surely
\begin{align*}
\int_0^T \big( \| a(s,Y_s) \|_{\calh} + \| b(s,Y_s) \|_{L_2(U,\calh)}^2 \big) ds < \infty
\end{align*}
as well as
\begin{align*}
Y_t = \eta + \int_0^t a(s,Y_s) ds + \int_0^t b(s,Y_s) dW_s, \quad t \in [0,T].
\end{align*}

\begin{assumption}\label{ass-SDE}
We assume there are constants $\beta,C_0,\theta \in \bbr_+$ such that $C_0 \geq \theta > 0$ and a nonnegative adapted process $f \in L^1([0,T] \times \Omega; dt \otimes \bbp)$ such that for all $u,v,w \in \calh$ and all $(t,\omega) \in [0,T] \times \Omega$ the following conditions are fulfilled:
\begin{enumerate}
\item[(H1)] (Hemicontinuity) The map $\lambda \mapsto \la a(t,u+\lambda v,\omega),w \ra_{\calh}$ is continuous on $\bbr$.

\item[(H2')] (Local monotonicity) We have
\begin{align*}
&2\la a(t,u,\omega) - a(t,v,\omega), u-v \ra_{\calh} + \| b(t,u,\omega) - b(t,v,\omega) \|_{L_2(U,\calh)}^2
\\ &\leq (f(t,\omega) + \tau(\| v \|_{\calh})) \| u-v \|_{\calh}^2,
\end{align*}
where $\tau : \bbr_+ \to \bbr_+$ is a continuous, increasing function.

\item[(H3)] (Coercivity) We have
\begin{align*}
2 \la a(t,v,\omega),v \ra_{\calh} + \| b(t,v,\omega) \|_{L_2(U,\calh)}^2 \leq (C_0-\theta) \| v \|_{\calh}^2 + f(t,\omega).
\end{align*}
\item[(H4')] (Growth) We have
\begin{align*}
\| a(t,v,\omega) \|_{\calh}^2 \leq ( f(t,\omega) + C_0 \| v \|_{\calh}^2 ) (1 + \| v \|_{\calh}^{\beta}).
\end{align*}
\end{enumerate}
\end{assumption}

\begin{remark}
Note that Assumption \ref{ass-SDE} corresponds to conditions (H1), (H2'), (H3), (H4') in \cite[Sec. 5.1.1]{Liu-Roeckner} with $V = H = \calh$, $\alpha = 2$, $C_0 \geq \theta$ and the function $\rho : \calh \to \bbr_+$ being of the particular form $\rho(v) = \tau(\| v \|_{\calh})$ for $v \in \calh$. These slightly stronger conditions allow us to transfer the upcoming result for SDEs (see Theorem \ref{thm-SDE}) to the framework of mild solutions for SPDEs (see Theorem \ref{thm-SPDE}) by using the method of the moving frame.
\end{remark}

\begin{theorem}\label{thm-SDE}
Suppose that Assumption \ref{ass-SDE} is satisfied for some $f \in L^{p/2}([0,T] \times \Omega; dt \otimes \bbp)$ with some $p \geq \beta + 2$, and there is a constant $C \in \bbr_+$ such that
\begin{align}\label{b-zusatzbedingung}
\| b(t,v,\omega) \|_{L_2(U,\calh)}^2 &\leq C ( f(t,\omega) + \| v \|_{\calh}^2 ), \quad (t,v,\omega) \in [0,T] \times \calh \times \Omega,
\\ \tau(r) &\leq C(1 + r^2) (1 + r^{\beta}), \quad r \in \bbr_+.
\end{align}
Then the following statements are true:
\begin{enumerate}
\item For each $\calf_0$-measurable random variable $\eta : \Omega \to \calh$ there exists a unique strong solution $Y$ to the SDE (\ref{SDE}) with $Y_0 = \eta$.

\item If $\eta \in L^p(\Omega,\calf_0,\bbp;\calh)$, then we have
\begin{align*}
\bbe \bigg[ \sup_{t \in [0,T]} \| Y_t \|_{\calh}^p \bigg] < \infty.
\end{align*}

\item There is an increasing function $K : \bbr_+ \to \bbr_+$ such that for every $\eta \in L^2(\Omega,\calf_0,\bbp;\calh)$ we have
\begin{align*}
\bbe \big[ \| Y_t \|_{\calh}^2 \big] \leq K(t) \big( 1 + \bbe[\| \eta \|_{\calh}^2] \big), \quad t \in [0,T],
\end{align*}
where $Y$ denotes the strong solution to the SDE (\ref{SDE}) with $Y_0 = \eta$.

\item If $f \in L^{p/2}([0,T])$ is deterministic and $\tau \in \bbr_+$ is a constant, then for each $t \in [0,T]$ the solution map
\begin{align*}
L^2(\Omega,\calf_0,\bbp;\calh) \to L^2(\Omega,\calf_t,\bbp;\calh), \quad \eta \mapsto Y_t(\eta),
\end{align*}
where $Y(\eta)$ denotes the strong solution to the SDE (\ref{SDE}) with $Y_0(\eta) = \eta$, is Lipschitz continuous.
\end{enumerate}
\end{theorem}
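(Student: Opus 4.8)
The plan is to reduce Theorem \ref{thm-SDE} to the known theory of strong solutions to locally monotone SDEs in the variational framework, specifically the results collected in \cite[Sec.~5.1.1]{Liu-Roeckner}, by exploiting the trivial Gelfand triple $V = H = \calh$ (with $\calh$ reflexive and separable). As noted in the remark preceding the statement, Assumption \ref{ass-SDE} is precisely conditions (H1), (H2'), (H3), (H4') from that reference with $\alpha = 2$ and $\rho(v) = \tau(\|v\|_{\calh})$; the extra hypothesis (\ref{b-zusatzbedingung}) — boundedness of $b$ by $f + \|v\|_{\calh}^2$ and the polynomial bound on $\tau$ — is exactly what one needs so that the growth of the drift, which is at most $\|a(t,v)\|_{\calh}^2 \lesssim (f + \|v\|_{\calh}^2)(1 + \|v\|_{\calh}^{\beta})$, together with $b$, stays within the $L^{p/2}$-integrability budget for $p \geq \beta + 2$.

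First I would invoke the existence and uniqueness theorem for locally monotone SDEs (e.g. \cite[Thm.~5.1.3]{Liu-Roeckner} or the corresponding result in \cite{LR-2010, Liu, RRW}) to obtain statement (1): for each $\calf_0$-measurable $\eta$ there is a unique continuous adapted $\calh$-valued strong solution $Y$ with $Y_0 = \eta$. In the trivial triple $V = H = \calh$ the variational solution and the strong (mild, with trivial semigroup) solution coincide, so no reconciliation of solution concepts is needed beyond remarking this. Next, statement (2), the $L^p$-bound on the supremum, follows from the a~priori moment estimates that are part of, or a standard consequence of, that same body of results: one applies Itô's formula to $\|Y_t\|_{\calh}^p$, uses (H3) to control the drift term, (\ref{b-zusatzbedingung}) to control the quadratic-variation term, the Burkholder–Davis–Gundy inequality for the martingale part, and the assumed integrability $f \in L^{p/2}([0,T] \times \Omega)$, and closes the estimate by Gronwall. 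Statement (3) is the special case $p = 2$ of the same estimate, tracking the dependence of the Gronwall constant on $t$ to produce the increasing function $K$; here one only needs (H3), $f \in L^1$, and (\ref{b-zusatzbedingung}).

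For statement (4), the Lipschitz dependence of $\eta \mapsto Y_t(\eta)$ in $L^2$ when $f$ is deterministic and $\tau$ is a constant, I would take two solutions $Y = Y(\eta)$, $\tilde Y = Y(\tilde\eta)$, apply Itô's formula to $\|Y_t - \tilde Y_t\|_{\calh}^2$, and estimate the resulting drift using the local monotonicity (H2'): with $\tau$ constant, the right-hand side of (H2') becomes $(f(t) + \tau)\|Y_t - \tilde Y_t\|_{\calh}^2$, so the coefficient $f(t) + \tau$ is deterministic and in $L^1([0,T])$. Taking expectations kills the martingale term and Gronwall's inequality then yields $\bbe[\|Y_t - \tilde Y_t\|_{\calh}^2] \leq e^{\int_0^t (f(s)+\tau)\,ds}\,\bbe[\|\eta - \tilde\eta\|_{\calh}^2]$, which is the claimed Lipschitz bound with constant $\exp(\|f\|_{L^1([0,T])} + \tau T)$. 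The point that makes this work — and the only place where the deterministic/constant hypothesis is essential — is that without it the Gronwall factor would be a random variable of the form $\exp(\int_0^t (f(s,\omega) + \tau(\|\tilde Y_s\|_{\calh}))\,ds)$, which cannot be pulled out of the expectation; one then only gets continuity of the solution map, not Lipschitz continuity.

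The main obstacle is not conceptual but bookkeeping: one must check that the precise hypotheses of the cited variational-SDE theorem are met by Assumption \ref{ass-SDE} plus (\ref{b-zusatzbedingung}) — in particular that the $\rho(v) = \tau(\|v\|_{\calh})$ appearing in (H2') satisfies whatever integrability/growth condition that theorem imposes (this is exactly why the polynomial bound $\tau(r) \leq C(1+r^2)(1+r^{\beta})$ is assumed, matching the $\beta$ in the drift growth and the exponent $p \geq \beta+2$), and that the drift and diffusion growth are compatible with $L^{p/2}$-valued $f$. Since the present paper only needs to \emph{cite} this SDE result (it is an appendix whose purpose is to record what is transported through the moving frame in Theorem \ref{thm-SPDE}), the proof can legitimately consist of verifying this correspondence and then quoting \cite{Liu-Roeckner} — with the $L^2$-Lipschitz claim (4), which is slightly finer than the generic statement, proved by the short Itô/Gronwall argument sketched above.
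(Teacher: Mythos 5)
Your overall route is the same as the paper's: reduce to the variational theory via the trivial Gelfand triple $V = H = \calh$ and quote \cite[Thm.~5.1.3]{Liu-Roeckner}, then supply the $L^2$-estimate and the Lipschitz dependence by It\^{o}'s formula together with (H3), respectively (H2'), and the exponential weight / Gronwall argument. Parts (2)--(4) of your sketch match the paper's proof in substance.

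There is, however, one genuine gap in your treatment of statement (1). The cited theorem from \cite{Liu-Roeckner} produces a solution only for initial conditions $\eta \in L^p(\Omega,\calf_0,\bbp;\calh)$, whereas statement (1) asserts existence and uniqueness for \emph{every} $\calf_0$-measurable $\eta : \Omega \to \calh$, with no integrability assumed. You invoke the theorem and immediately claim the conclusion for all measurable $\eta$; that step does not follow from the citation alone. The paper closes this gap with a localization in the initial condition: one introduces the $\calf_0$-measurable partition $\Omega_n := \{ \| \eta \|_{\calh} \in [n-1,n) \}$ and the bounded (hence $L^p$) random variables $\eta_n := \eta \bbI_{\Omega_n}$, solves the SDE for each $\eta_n$, and patches the solutions together on the $\Omega_n$, using pathwise uniqueness to see that the resulting process is well defined and unique. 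A related, more minor point: in statement (3) you describe the $L^2$-bound as "the special case $p=2$ of the same estimate closed by Gronwall," but for $\eta$ merely in $L^2$ the martingale term in It\^{o}'s formula is only a local martingale, so one must stop, take expectations, and pass to the limit by Fatou's lemma (as the paper does with the weight $e^{-(C_0-\theta)t}$) rather than taking expectations directly. Neither repair is difficult, but both are needed for the argument to be complete.
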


\begin{proof}
If $\eta \in L^p(\Omega,\calf_0,\bbp;\calh)$, then the first two statements are immediate consequences of \cite[Thm. 5.1.3]{Liu-Roeckner}. Now, for an arbitrary $\calf_0$-measurable random variable $\eta : \Omega \to \calh$, the first statement follows by considering the $\calf_0$-measurable partition $(\Omega_n)_{n \in \bbn}$ of $\Omega$ given by
\begin{align*}
\Omega_n := \{ \| \eta \|_{\calh} \in [n-1,n) \}
\end{align*}
and the sequence $(\eta_n)_{n \in \bbn}$ of $\calf_0$-measurable random variables $\eta_n := \eta \bbI_{\Omega_n}$ for each $n \in \bbn$. Now, let $t \in [0,T]$ be arbitrary. By It\^{o}'s formula (see, for example \cite[Thm. 4.2.5]{Liu-Roeckner}) and the coercivity condition (H3) we obtain
\begin{align*}
\| Y_t \|_{\calh}^2 e^{-(C_0 - \theta)t} &= \| \eta \|_{\calh}^2 + \int_0^t e^{-(C_0 - \theta)s} \Big( 2 \la a(s,Y_s), Y_s \ra_{\calh} + \| b(s,Y_s) \|_{L_2(U,\calh)}^2
\\ &\qquad\qquad\qquad - (C_0 - \theta) \| Y_s \|_{\calh}^2 \Big) ds + M_t
\\ &\leq \| \eta \|_{\calh}^2 + \int_0^t e^{-(C_0 - \theta)s} f(s) ds + M_t
\\ &\leq \| \eta \|_{\calh}^2 + \int_0^T f(s) ds + M_t
\end{align*}
with a continuous local martingale $M \in \calm_{\loc}$ such that $M_0 = 0$. Therefore, by localization and using Fatou's lemma we obtain
\begin{align*}
\bbe \big[ \| Y_t \|_{\calh}^2 e^{-(C_0 - \theta)t} \big] \leq \bbe[ \| \eta \|_{\calh}^2 ] + \| f \|_{L^1},
\end{align*}
proving the claimed estimate. Finally, if $f \in L^{p/2}([0,T])$ is deterministic and $\tau \in \bbr_+$ is a constant, then the proof of the aforementioned result (see \cite[p. 132]{Liu-Roeckner}) shows that for two strong solutions $Y$ and $Z$ to the SDE (\ref{SDE}) we have
\begin{align*}
\bbe \bigg[ \exp \bigg( -\int_0^t f(s) ds - \tau t \bigg) \| Y_t - Z_t \|_{\calh}^2 \bigg] \leq \bbe \big[ \| Y_0 - Z_0 \|_{\calh}^2 \big], \quad t \in [0,T],
\end{align*}
providing the asserted Lipschitz continuity.
\end{proof}

For the rest of this section, we assume that the coefficients do not depend on $\omega \in \Omega$; that is, we have measurable mappings
\begin{align*}
a : [0,T] \times \calh \to \calh \quad \text{and} \quad b : [0,T] \times \calh \to L_2(U,\calh).
\end{align*}
Furthermore, we assume that Assumption \ref{ass-SDE} is satisfied for some deterministic $f \in L^{p/2}([0,T])$ with some $p \geq \beta + 2$, that $\tau \in \bbr_+$ is a constant, and that there is a constant $C \in \bbr_+$ such that
\begin{align}
\| b(t,v) \|_{L_2(U,\calh)}^2 &\leq C ( f(t) + \| v \|_{\calh}^2 ), \quad (t,v) \in [0,T] \times \calh.
\end{align}
According to Theorem \ref{thm-SDE}, for each $s \in [0,T]$ and each $\calf_s$-measurable random variable $\xi : \Omega \to H$ there exists a unique solution $Y = Y(s,\eta)$ to the equation
\begin{align*}
Y_t = \eta + \int_s^t a(u,Y_u) du + \int_s^t b(u,Y_u) dW_u, \quad t \in [s,T]
\end{align*}
with underlying Wiener process $W_t - W_s$, $t \in [s,T]$ and filtration $(\calf_t)_{t \geq s}$.

\begin{proposition}\label{prop-Markov-SDE}
For all $0 \leq s \leq t \leq T$, every $\calf_s$-measurable random variable $\eta : \Omega \to \calh$ and every $\Phi \in B_b(\calh)$ we have $\bbp$-almost surely
\begin{align}\label{eqn-Markov-Y}
\bbe[\Phi(Y_t(s,\eta)) | \calf_s] = \bbe[\Phi(Y_t(s,y))]|_{y = \eta}.
\end{align}
\end{proposition}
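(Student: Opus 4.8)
The plan is to prove the Markov property for the SDE (\ref{SDE}) by the standard argument that relies on uniqueness of strong solutions together with a ``freezing the initial value'' technique, using the fact that the coefficients $a$ and $b$ are time-homogeneous in randomness (they do not depend on $\omega$) but depend measurably on $(t,v)$.

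First I would reduce the statement to the case of a deterministic initial value. For fixed $s$, consider the function $\psi : [s,T] \times \calh \to \bbr$ defined by $\psi(t,y) := \bbe[\Phi(Y_t(s,y))]$; I would first check it is jointly measurable. For a deterministic starting point $y \in \calh$ the process $Y(s,y)$ is, by construction, adapted to the filtration $(\calf_t)_{t \ge s}$ generated together with the shifted Wiener process $W_\bdot - W_s$, hence its law, and therefore $\psi(t,\cdot)$, depends only on that Wiener increment and is independent of $\calf_s$. Measurability in $y$ follows from the $L^2$-Lipschitz dependence of $Y_t(s,y)$ on $y$ provided by Theorem \ref{thm-SDE}(4) (after using the partition argument to reduce to $L^p$ data), which makes $y \mapsto Y_t(s,y)$ continuous from $\calh$ to $L^2(\Omega;\calh)$, so $y \mapsto \bbe[\Phi(Y_t(s,y))]$ is continuous for $\Phi \in C_b(\calh)$ and measurable for general $\Phi \in B_b(\calh)$ by a monotone class argument.

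Next I would handle a general $\calf_s$-measurable $\eta$. The key point is that, by uniqueness of strong solutions to the SDE started at time $s$, the map $\eta \mapsto Y_\bdot(s,\eta)$ is ``local'' in the sense that it commutes with pasting on $\calf_s$-measurable partitions: if $(\Omega_n)$ is an $\calf_s$-measurable partition and $\eta = \sum_n y_n \bbI_{\Omega_n}$ with $y_n \in \calh$, then $Y_t(s,\eta) = \sum_n Y_t(s,y_n) \bbI_{\Omega_n}$ almost surely, because the right-hand side solves the equation with initial condition $\eta$ and uniqueness applies. For such simple $\eta$,
\begin{align*}
\bbe[\Phi(Y_t(s,\eta)) \mid \calf_s] = \sum_n \bbI_{\Omega_n} \bbe[\Phi(Y_t(s,y_n)) \mid \calf_s] = \sum_n \bbI_{\Omega_n} \psi(t,y_n) = \psi(t,\eta),
\end{align*}
where the middle equality uses the independence of $Y_t(s,y_n)$ from $\calf_s$. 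For general $\eta$ I would approximate it pointwise by simple $\calf_s$-measurable random variables $\eta_k \to \eta$ in $\calh$, pass to the limit on the left using the $L^2$-continuity of $\eta \mapsto Y_t(s,\eta)$ from Theorem \ref{thm-SDE}(4) together with dominated convergence for conditional expectations (for $\Phi \in C_b(\calh)$), and pass to the limit on the right using continuity of $\psi(t,\cdot)$; finally extend from $C_b(\calh)$ to $B_b(\calh)$ by a monotone class / functional form of Dynkin's lemma.

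The main obstacle I anticipate is the ``locality'' / independence step: making rigorous that $Y_\bdot(s,y)$ for deterministic $y$ is genuinely independent of $\calf_s$ (which uses that the driving noise on $[s,T]$ is the increment $W_\bdot - W_s$ and that the coefficients are non-random), and that solutions glue correctly along $\calf_s$-measurable partitions via uniqueness. Once these two facts are in place, together with the $L^2$-Lipschitz estimate from Theorem \ref{thm-SDE}(4) to pass from simple to general initial data, the rest is a routine approximation and monotone-class argument.
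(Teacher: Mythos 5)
Your proposal follows essentially the same route as the paper's (outlined) proof: reduction to $\Phi \in C_b(\calh)$ by a monotone class argument, independence of $Y(s,y)$ from $\calf_s$ for deterministic $y$, gluing along $\calf_s$-measurable partitions via pathwise uniqueness, and the $L^2$-Lipschitz continuity of the solution map from Theorem \ref{thm-SDE}(4) to pass to general initial data. One step needs a small correction: for a \emph{general} $\calf_s$-measurable $\eta$ you propose to approximate pointwise by simple random variables and invoke the $L^2$-continuity of $\eta \mapsto Y_t(s,\eta)$, but such an $\eta$ need not be square-integrable, and pointwise convergence $\eta_k \to \eta$ does not yield $L^2$-convergence, so Theorem \ref{thm-SDE}(4) is not directly applicable there. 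The limiting argument via $L^2$-continuity only settles the case $\bbe[\|\eta\|_{\calh}^2] < \infty$; the fully general case should then be obtained by one more application of your own gluing lemma, using the $\calf_s$-measurable partition $\Omega_n = \{\|\eta\|_{\calh} \in [n-1,n)\}$ and the bounded (hence square-integrable) pieces $\eta \bbI_{\Omega_n}$ -- which is exactly the extra localization step the paper inserts. Since the required locality principle is already established in your argument, this is a reordering rather than a missing idea.
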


\begin{proof}
We provide an outline of the proof, which follows a well-known pattern. First of all, by a monotone class argument, we may assume that $\Phi \in C_b(\calh)$. Then we proceed with the following steps:
\begin{enumerate}
\item[(1)] If $\eta = y$ almost surely for some $y \in \calh$, then equation (\ref{eqn-Markov-Y}) is satisfied, because $Y(s,\eta)$ is independent of $\calf_s$.

\item[(2)] Then we show equation (\ref{eqn-Markov-Y}) for every $\calf_s$-measurable random variable $\eta : \Omega \to \calh$ of the form $\eta = \sum_{j=1}^N x_j \bbI_{\Gamma_j}$ with $N \in \bbn$, elements $x_1,\ldots,x_N \in \calh$ and an $\calf_s$-measurable partition $(\Gamma_j)_{j=1,\ldots,N}$ of $\Omega$.

\item[(3)] Using the Lipschitz continuity of the solution map
\begin{align*}
L^2(\Omega,\calf_0,\bbp;\calh) \to L^2(\Omega,\calf_t,\bbp;\calh), \quad \eta \mapsto Y_t(s,\eta)
\end{align*}
from Theorem \ref{thm-SDE} and the continuity of $\Phi$, we prove equation (\ref{eqn-Markov-Y}) for every $\calf_s$-measurable random variable $\eta : \Omega \to \calh$ such that $\bbe[\| \eta \|_{\calh}^2] < \infty$.

\item[(4)] For a general $\calf_s$-measurable random variable $\eta : \Omega \to \calh$ we show equation (\ref{eqn-Markov-Y}) by considering the $\calf_s$-measurable partition $(\Omega_n)_{n \in \bbn}$ of $\Omega$ given by
\begin{align*}
\Omega_n := \{ \| \eta \|_{\calh} \in [n-1,n) \}
\end{align*}
and the sequence $(\eta_n)_{n \in \bbn}$ of $\calf_s$-measurable random variables $\eta_n := \eta \bbI_{\Omega_n}$ for each $n \in \bbn$.
\end{enumerate}
\end{proof}

\end{appendix}

\end{document}